\DeclareFontFamily{U}{yswab}{}	
\DeclareFontShape{U}{yswab}{m}{n}{<-> yswab}{}
\numberwithin{equation}{section}
\newtheorem{theorem}{Theorem}[section]
\newtheorem{remark}{Remark}[section]
\newtheorem{definition}{Definition}[section]
\newtheorem{proposition}{Proposition}[section]
\title{A Generalization of the Space-Fractional Poisson Process and its Connection
	to some L\'evy Processes}
\date{}
\author{$\text{Federico Polito}^1$ \& $\text{Enrico Scalas}^2$\\
	\footnotesize{${}^1$Dipartimento di Matematica \emph{G.~Peano}, Universit\`a degli Studi di Torino, Italy}\\
	\footnotesize{${}^2$Department of Mathematics, University of Sussex, Brighton, UK}}
\begin{document}

	\maketitle

	\begin{abstract}
			\noindent This paper introduces a generalization of the so-called space-fractional Poisson process by extending the difference operator
			acting on state space present in the associated difference-differential equations to a much more general form.
			It turns out that this generalization can be put in relation to a specific subordination of a homogeneous Poisson process
			by means of a subordinator for which it is possible to express the characterizing L\'evy measure explicitly.
			Moreover, the law of this subordinator solves a one-sided first-order differential equation in which a
			particular convolution-type integral operator appears, called Prabhakar derivative.
			In the last section of the paper, a similar model is introduced in which the Prabhakar derivative also acts in time.
			In this case, too, the probability generating function of the corresponding process and the probability
			distribution are determined.

			\medskip

			\noindent \textit{Keywords:} Fractional point processes; L\'evy processes;
			Prabhakar integral; Prabhakar derivative; Time-change; Subordination.
			
			\noindent \textit{AMS MSC 2010}: 60G51; 60G22; 26A33.
	\end{abstract}

	\section{Introduction and background}

		In the last decade, it became apparent that several phenomena that can be modeled in terms of point processes are non Poissonian in nature 	
		(see \citep{barabasi,jiang} as examples). As a consequence, there has been an increased interest in generalizing the Poisson process $N(t)$.
		The Poisson process is a counting process with many nice properties. It is a L\'evy process and, therefore, its increments are
		time-homogeneous and independent. It is a renewal process, meaning that the sojourn times between points (or events) are
		independent and identically distributed following the exponential distribution. It is a birth-death Markov process and its
		counting probability obeys the forward Kolmogorov equation
		\begin{equation}
			\label{poisson}
 			\frac{\textup d}{\textup dt} p_k (t) = -\lambda p_k (t) + \lambda p_{k-1} (t),
		\end{equation}
		where $p_k (t) = \mathbb{P}\{N(t) = k\}$, $k \geq 0$, $t \geq 0$ are the state probabilities of the Poisson process
		and $\lambda$ is the rate of the Poisson process. There are many possible ways to generalize this process. We are interested
		in the so-called fractional generalizations of the Poisson process where either the derivative
		on the left-hand side or the difference equation on the right-hand side of \eqref{poisson} are replaced by suitable fractional operators.

		For instance, if one keeps the renewal property and considers sojourn times such that
		$\mathbb{P}\{N_\beta (t) =0\} = E_\beta ( -t^\beta)$ for $0 < \beta < 1$, where
		$E_\beta (z) = \sum_{k=0}^\infty z^k/\Gamma(k \beta +1)$
		is the one-parameter Mittag-Leffler function, one gets the renewal fractional Poisson process $N_\beta(t)$
		discussed in \citep{mainardivietnam}. This leads 
		to the equation for $p_k (t) = \mathbb{P}\{N_\beta (t) = k\}$
		\citep{laskin},
		\begin{equation}
			\frac{\textup d^\beta}{\textup dt^\beta} p_k (t) = -\lambda p_k (t) + \lambda p_{k-1} (t),
			\qquad  k \geq 0, \: t \geq 0,
		\end{equation}
		(with $\lambda=1$)
		where $\textup d^\beta/ \textup dt^\beta$ is the so-called Caputo derivative, a pseudo-differential operator defined as 
		\begin{equation}
			\label{caputo}
			\frac{\textup d^\beta}{\textup d t^\beta} f(t) = \frac{1}{\Gamma (1 - \beta)} \int_0^t (t - u)^{-\beta} 
			\frac{\textup d f(u)}{\textup d u} \, \textup du.  
		\end{equation}
		The renewal fractional Poisson process is not a L\'evy process \citep{kaizoji}.

		Another possibility is generalizing the Poisson process via the so-called space-fractional Poisson process studied
		in \citep{orsingher}. The space-fractional Poisson process is in practice a homogeneous Poisson process
		subordinated to an independent stable subordinator. We describe this model in details in Section \ref{spafra}.
		Here we aim at a further generalization of this process preserving the L\'evy property.
		This generalized process is constructed by means of a superposition
		of suitably weighted independent space-fractional Poisson processes. The resulting process is then subordinated
		by means of a random time process in order to account for the modelization of a possible irregular flow of time.
		The complete construction of the model is done in Section \ref{genspafra}.

		The motivation at the basis of this study lies in the importance that superposition of, possibly dependent, point processes plays
		in applied sciences. An important example concerns the modelling of neurons' incoming signals. It is commonly accepted that
		each neuron obtains information from the neighbouring
		neurons in the form of \emph{spike trains} (i.e.\ signals with powerful bursts), closely resembling
		realizations of stochastic point processes.
		Therefore, each neuron, receives a superposition of, possibly rescaled, point processes and its subsequent behaviour depends
		on the characteristics of this afferent combined input signal.
		Based on classical result contained in \citep{cox1954superposition,grige, franken, cinlar}
		(roughly saying that a superposition of sufficiently sparse independent point processes converges to a Poisson process),
		the input signal was considered in this applied literature to be well approximated by a Poisson process \citep[see e.g.][]{hohn, shimokawa}.
		However, it was later shown that the above result does not always apply to superposition of signals from neurons' activity
		and that experimental evidence deviates from a Poissonian structure \citep{lindner, cat, deger}.
		Recently, the study of weak convergence of superposition of point processes has regained interest
		\citep[see e.g.][and the references therein]{chen} showing that different behaviours are possible.
		Within this framework, we can consider the model we are going to describe as a weighted finite superposition of independent space-fractional
		Poisson processes (each of them generalizing the homogeneous Poisson process but also admitting the possibility
		of jumps of any integer order). Each of these space-fractional Poisson processes can be thought to model different groups
		of neurons (different areas of the brain) acting together with simultaneous spikes giving rise to the non-unitary jumps.
		Notice, finally, that the space-fractional Poisson process is a non-renewal process
		and so is the generalized space-fractional Poisson process. This is a key feature for the combined neurons' input signal as
		it is remarked in \citep{lindner}.
		
		For the sake of clarity and simplicity, we first recall some selected
		basic mathematical results regarding subordinators
		which will be useful in the following. Section \ref{lenovo} presents
		the construction of a random time-change by means of independent subordinators
		and tempered subordinators. This is of fundamental importance for the definition
		of the generalized space-fractional Poisson process which will be carried out in Section \ref{app}.
		 
		We start thus by recalling some basic facts on subordinators and tempered stable subordinators.
		The reader can refer to \citep{bertoin} or \citep{kyprianou} for a more in-depth explanation.
		We recall that a subordinator is an increasing L\'evy process
		defined as follows.
		\begin{definition}
			Given a filtered probability space $(\Omega, \mathcal{F}, \mathfrak{F}, \mathbb{P})$,
			where $\mathfrak{F} = (\mathcal{F}_t)_{t\ge 0}$ is the associated right-continuous filtration,
			the process $S_t$, $t \ge 0$, adapted to $\mathfrak{F}$, starting from zero, and with increasing paths, is called a
			subordinator if it has independent and stationary increments or, equivalently, $\forall s > t \ge 0$,
			$S_t-S_s$ is independent of $\mathcal{F}_s$ and $S_t-S_s = S_{t-s}$ in distribution.
		\end{definition}
		In this paper we consider only strict subordinators that is those with infinite lifetime
		and with the only cemetery state ($\infty$) located at $t=\infty$.
		For this special class of subordinators a simplified version of the well-known
		L\'evy--Khintchine formula for general L\'evy processes holds. In particular, the following theorem
		gives a characterization of subordinators in terms of Laplace transforms of their
		one dimensional law. Let us first recall that, thanks to stationarity and independence of
		the increments, we have that
		\begin{align}
			\mathbb{E} \exp ( -\mu S_t ) = \exp \left[ - t \Phi (\mu) \right], \qquad t \ge 0, \: \mu \ge 0,
		\end{align}
		where $\Phi(\mu)$ is called the Laplace exponent of the process $S_t$, $t \ge 0$.
		Different expressions for $\Phi(\mu)$ give rise to different subordinators, but not
		any functional form is allowed. This is exactly what the L\'evy--Khintchine formula
		for subordinators tells us.

		\begin{theorem}
			\label{lkt}
			Any function $\Phi(\mu)$, $\mu \ge 0$, that can be put in the unique form
			\begin{align}
				\label{lk}
				\Phi(\mu) = b \mu + \int_0^\infty \left(1-e^{-\mu x}\right) m(\mathrm dx), \qquad b \ge 0,
			\end{align}
			where $m(\mathrm dx)$ is a measure concentrated on $(0,\infty)$ such that
			$\int_0^\infty (1 \wedge x) m(\mathrm dx) < \infty$, is the Laplace exponent of a strict
			subordinator.
			Conversely, if $\Phi(\mu)$ is the Laplace exponent of a strict subordinator,
			there exist a non negative number $b$ and a unique measure $m$ with
			$\int_0^\infty (1 \wedge x) m(\mathrm dx) < \infty$ such that \eqref{lk} holds true.
		\end{theorem}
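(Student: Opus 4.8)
The plan is to establish the two implications separately, in each case reducing the statement to standard facts about infinitely divisible laws and compound Poisson processes. For the direct direction I would build the subordinator explicitly as an almost sure limit of approximating processes of the form ``drift plus compound Poisson''; for the converse I would exploit that $S_1$ is infinitely divisible and supported on $[0,\infty)$, and then read the representation \eqref{lk} off the classical L\'evy--Khintchine formula on the line, taking care of the sharper integrability condition near the origin. Uniqueness of the pair $(b,m)$ will follow in both cases from injectivity of the Laplace transform.

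\emph{Sufficiency.} Suppose $\Phi$ has the form \eqref{lk}. The hypothesis $\int_0^\infty(1\wedge x)\,m(\mathrm dx)<\infty$ forces $m((1,\infty))<\infty$ and $\int_{(0,1]}x\,m(\mathrm dx)<\infty$. Split $m=m'+m''$ with $m'=m|_{(1,\infty)}$ and $m''=m|_{(0,1]}$, and for $n\ge1$ set $m''_n=m|_{(1/n,1]}$, a finite measure. By the elementary formula for the Laplace transform of a compound Poisson variable, the function $\mu\mapsto b\mu+\int(1-e^{-\mu x})\,m'(\mathrm dx)+\int(1-e^{-\mu x})\,m''_n(\mathrm dx)$ is the Laplace exponent of $X^{(n)}_t=bt+C_t+Y^{(n)}_t$, where $C$ is compound Poisson with jump intensity $m'$ and $Y^{(n)}$ is an independent compound Poisson with jump intensity $m''_n$. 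Realizing the $Y^{(n)}$ on one probability space with nested jump sets, the paths $t\mapsto Y^{(n)}_t$ increase in $n$ to a limit $Y_t$ with $\mathbb E Y_t=t\int_{(0,1]}x\,m(\mathrm dx)<\infty$, so $Y_t<\infty$ almost surely, and passing to the limit in the Laplace transforms gives $\mathbb E e^{-\mu(bt+C_t+Y_t)}=e^{-t\Phi(\mu)}$. It remains to check that $S_t:=bt+C_t+Y_t$ admits a c\`adl\`ag modification with nondecreasing paths and stationary independent increments: the one-dimensional laws $\mu_t$ form a convolution semigroup (immediate from $e^{-s\Phi}e^{-t\Phi}=e^{-(s+t)\Phi}$), so the Kolmogorov extension theorem followed by the standard path regularization produces such a process, and since each $\mu_t$ is concentrated on $[0,\infty)$ with $\mu_0=\delta_0$ it is a strict subordinator with Laplace exponent $\Phi$. (Alternatively, one observes that $\Phi$ is a Bernstein function vanishing at the origin, hence $e^{-t\Phi}$ is completely monotone with value $1$ at $\mu=0$, and Bernstein's theorem directly furnishes the family $\{\mu_t\}$.)

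\emph{Necessity and uniqueness.} Conversely, let $S$ be a strict subordinator with Laplace exponent $\Phi$. Partitioning $[0,1]$ into $n$ equal subintervals writes $S_1$ as a sum of $n$ i.i.d.\ copies of $S_{1/n}$, so the law of $S_1$ is infinitely divisible and its L\'evy--Khintchine representation on $\mathbb R$ reads $\log\mathbb E e^{\mathrm i\theta S_1}=\mathrm i a\theta-\tfrac12\sigma^2\theta^2+\int(e^{\mathrm i\theta x}-1-\mathrm i\theta x\mathbf 1_{\{|x|\le1\}})\,\nu(\mathrm dx)$. Since $S$ has nondecreasing paths, its Gaussian part vanishes ($\sigma^2=0$) and $\nu$ is concentrated on $(0,\infty)$; and since the sum of the jumps of $S$ over $[0,1]$ is at most $S_1<\infty$, the theory of Poisson random measures forces the sharper condition $\int_{(0,1]}x\,\nu(\mathrm dx)<\infty$. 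The compensating term $-\mathrm i\theta x\mathbf 1_{\{|x|\le1\}}$ can then be absorbed into a nonnegative linear term, and passing from the characteristic function to the Laplace transform yields \eqref{lk} with $m=\nu$. Uniqueness is immediate afterwards: $b=\lim_{\mu\to\infty}\Phi(\mu)/\mu$, while $\Phi'(\mu)=b+\int_0^\infty e^{-\mu x}x\,m(\mathrm dx)$ together with injectivity of the Laplace transform recovers $m$.

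The step requiring the most care is the passage, in the converse direction, from the generic integrability $\int(1\wedge x^2)\,\nu(\mathrm dx)<\infty$ of a L\'evy measure to the stronger $\int(1\wedge x)\,\nu(\mathrm dx)<\infty$ --- that is, encoding the finite variation of a subordinator's paths --- together with the otherwise routine but slightly delicate verification of c\`adl\`ag regularity and independence of increments for the limit process on the constructive side; everything else reduces to the Laplace transform of a compound Poisson law and to standard properties of infinitely divisible distributions.
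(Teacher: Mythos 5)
Your proof is correct and follows the standard route; the paper itself does not prove Theorem \ref{lkt} but defers to Bertoin, whose argument is essentially your outline (compound-Poisson approximation of the small-jump part for sufficiency, infinite divisibility of $S_1$ together with the finite-variation constraint on the L\'evy measure for necessity, and injectivity of the Laplace transform for uniqueness of $(b,m)$). The one step you assert rather than justify is the nonnegativity of the residual drift $b$ in the converse direction --- it follows because $t\mapsto S_t-\sum_{s\le t}\Delta S_s$ is nondecreasing and equals $bt$ --- but that is a one-line fix.
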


		For a detailed proof see \citep{bertoin}. The measure $m$ is called the L\'evy measure
		and $b$ is the drift associated to the strict subordinator $S_t$, $t \ge 0$.
		For the sake of simplicity, in the following we shall use the term subordinator
		meaning in fact a strict subordinator.
		
		A particularly simple example of a subordinator is the so-called stable subordinator.
		This is characterized by the Laplace exponent $\Phi(\mu) = \mu^\alpha$, $\alpha \in (0,1)$,
		corresponding to a L\'evy measure $m (\mathrm dx) = \left[ \alpha/\Gamma(1-\alpha) \right]
		x^{-1-\alpha} \mathrm dx$.
		Note that the case $\alpha=1$ is omitted as it is trivial.
		The one-parameter $\alpha$-stable subordinator is at the basis of the probabilistic
		theory of anomalous diffusion based on subordination. In fact, it is
		a building block for the numerous processes connected to fractional
		evolution equations and their generalizations \citep{meerschaert, dovidio}
		and also for other processes connected for example to point processes \citep{orsingher,beghin}.
		Moreover, the importance of stable subordinators stems also from the fact that
		they are scaling limits of some totally skewed generalized random walks \citep{mee}.
		Given a subordinator $S_t$, $t \ge 0$, it is possible to define its right-inverse
		process \citep{bingham} as
		\begin{align}
			E_t = \inf \{ w > 0 \colon S_w > t \}, \qquad t \ge 0.
		\end{align}
		The process $E_t$ is non-Markovian with non-stationary and
		dependent increments \citep{veillette, lageras}.
		
		Let us now introduce some basic facts on tempered subordinators, in particular
		for tempered stable-subordinators. First of all, let us refer the reader to the paper
		\citep{rosinski} for a complete and detailed account on the general theory
		of tempered stable processes. However, it is interesting to note that tempered models
		already appeared in the literature (amongst others, for example, the KoBoL model \citep{kop, bl}).
		The class of tempered stable subordinators has been introduced in order
		to have processes possessing nicer properties than those of standard stable subordinators.
		In practice the L\'evy measure of a stable subordinator is exponentially tempered, thus
		obtaining
		\begin{align}
			m(\mathrm dx) = e^{-\xi x} \frac{\alpha}{\Gamma(1-\alpha)} x^{-1-\alpha} \mathrm dx, \qquad
			\alpha \in (0,1), \: \xi > 0.
		\end{align}
		This simple operation produces desirable results as seen immediately
		by realizing that the Laplace exponent in this case can be written as
		$\Phi (\mu) = (\xi+\mu)^\alpha - \xi^\alpha$.
		For more information regarding tempered stable subordinators and associated differential
		equations, see \citep{baeumer}.
		In Section \ref{lenovo}, we introduce a process which has both the standard
		stable subordinator and the tempered stable subordinator as special cases and we study
		its properties. In Section \ref{app}, instead, we describe connections of the introduced
		process with some difference-differential equations involving a generalized
		fractional difference operator acting in space. An interesting special
		case related to these equations is that regarding the so-called
		space-fractional Poisson process \citep{orsingher}.
		In the last Section \ref{verylast}, we present a further
		generalization leading to a process which is no longer a L\'evy process. This
		generalization is based on the study of similar difference-differential
		equations but involving the so-called regularized Prabhakar derivative
		in time that generalizes the Caputo derivative in time.

	\section{A subordinated sum of independent stable subordinators}
	
		\label{lenovo}
		Let us consider the filtered probability space $(\Omega, \mathcal{F}, \mathfrak{F}, \mathbb{P})$
		and the process
		\begin{align}
			{}_\eta \mathcal{V}_t^{\nu,n} = \sum_{r=1}^n \binom{n}{r}^{\frac{1}{\nu r}} \! \!
			\eta^{\frac{n-r}{\nu r}} V_t^{\nu r}
			= \sum_{r=1}^n V_{\binom{n}{r}\eta^{n-r} t}^{\nu r}, \quad t \ge 0,  \, n \in \mathbb{N}, \, \eta>0, \,
			\nu r \in (0,1), \, r=1, \dots, n,
		\end{align}
		where $V_t^{\nu r}$, $t \ge 0$, $r=1, \dots, n$, are $n$ $\mathfrak{F}$-adapted independent $\nu r$-stable subordinators.
		Let us further consider a positive real parameter $\delta$ such that $\lceil \delta \rceil = n$
		and the $\mathfrak{F}$-adapted tempered $\delta/n$-stable subordinator $\mathscr{V}_t^{\delta/n}$,
		$t \ge 0$,
		evaluated at $\eta^n$, i.e.\ with Laplace exponent
		\begin{align}
			\label{garlach}
			\Phi(\mu) = \left( \eta^n + \mu \right)^{\delta/n} - \eta^\delta, \qquad \delta > 0, \: \eta > 0.
		\end{align}

		\begin{proposition}
			The Laplace transform of ${}_\eta \mathcal{V}_t^{\nu,n}$, $t \ge 0$, reads
			\begin{align}
				\label{rossi}
				\mathbb{E} \exp \left( -\mu \, {}_\eta \mathcal{V}_t^{\nu,n} \right)
				= \exp \left( -t \left[ \left( \eta+\mu^\nu \right)^n -\eta^n \right] \right),
				\qquad t \ge 0, \: \mu > 0.
			\end{align}
			\begin{proof}
				Formula \eqref{rossi} can be proven by noticing that
				\begin{align}
					\mathbb{E} \exp \left( -\mu \: {}_\eta \mathcal{V}_t^{\nu,n} \right)
					= \prod_{r=1}^n \mathbb{E} \exp \left( - \mu \binom{n}{r}^{\frac{1}{\nu r}}
					\eta^{\frac{n-r}{\nu r}} V_t^{\nu r} \right)
				\end{align}
				and by recalling that $\mathbb{E} \exp \left( -\mu V_t^\beta \right) = \exp \left( -t \mu^\beta \right)$.
				We thus obtain
				\begin{align}
					\mathbb{E} \exp \left( -\mu \: {}_\eta \mathcal{V}_t^{\nu,n} \right)
					= \prod_{r=1}^n \exp \left( -t \binom{n}{r} \eta^{n-r} \mu^{\nu r} \right)
				\end{align}
				and hence the claimed result.
			\end{proof}
		\end{proposition}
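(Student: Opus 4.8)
The plan is to exploit the independence of the $n$ stable subordinators together with the elementary scaling property of stable subordinators, and then to recognise the resulting exponent as a binomial expansion.

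First I would start from the first of the two representations of ${}_\eta\mathcal{V}_t^{\nu,n}$, namely the weighted sum $\sum_{r=1}^n \binom{n}{r}^{1/(\nu r)}\eta^{(n-r)/(\nu r)} V_t^{\nu r}$. Since the $V_t^{\nu r}$, $r=1,\dots,n$, are $\mathfrak{F}$-adapted and independent, the expectation of the exponential of the sum factorises into a product of individual expectations:
\[
\mathbb{E}\exp\!\left(-\mu\,{}_\eta\mathcal{V}_t^{\nu,n}\right)
= \prod_{r=1}^n \mathbb{E}\exp\!\left(-\mu\binom{n}{r}^{1/(\nu r)}\eta^{(n-r)/(\nu r)}V_t^{\nu r}\right).
\]

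Next I would invoke the Laplace transform of a $\beta$-stable subordinator, $\mathbb{E}\exp(-\mu V_t^\beta)=\exp(-t\mu^\beta)$, which is available here because each $\nu r$ lies in $(0,1)$ and hence each $V_t^{\nu r}$ is a genuine stable subordinator with Laplace exponent $\mu^{\nu r}$. Applying it with $\beta=\nu r$ and with the positive constant $\mu\binom{n}{r}^{1/(\nu r)}\eta^{(n-r)/(\nu r)}$ in the role of $\mu$, the exponent of the $r$-th factor collapses to $-t\binom{n}{r}\eta^{n-r}\mu^{\nu r}$ (the $1/(\nu r)$ powers cancelling against the $\nu r$ power), so that
\[
\mathbb{E}\exp\!\left(-\mu\,{}_\eta\mathcal{V}_t^{\nu,n}\right)
= \exp\!\left(-t\sum_{r=1}^n \binom{n}{r}\eta^{n-r}\mu^{\nu r}\right).
\]

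Finally I would observe that the sum in the exponent is the binomial expansion of $(\eta+\mu^\nu)^n$ with the $r=0$ term removed: since $\sum_{r=0}^n\binom{n}{r}\eta^{n-r}(\mu^\nu)^r=(\eta+\mu^\nu)^n$ and the $r=0$ summand equals $\eta^n$, subtracting it gives $\sum_{r=1}^n\binom{n}{r}\eta^{n-r}\mu^{\nu r}=(\eta+\mu^\nu)^n-\eta^n$, which is precisely \eqref{rossi}. I do not expect any genuine obstacle here: the only points needing a little care are the bookkeeping of the exponents under the scaling relation (equivalently, the identity between the two displayed expressions for ${}_\eta\mathcal{V}_t^{\nu,n}$), and noting that each $\nu r\in(0,1)$ so that the stable Laplace transform may legitimately be applied term by term.
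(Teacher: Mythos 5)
Your proposal is correct and follows essentially the same route as the paper: factorize via independence, apply the stable Laplace exponent $\mathbb{E}\exp(-\mu V_t^\beta)=\exp(-t\mu^\beta)$ to each term, and identify the resulting sum as the binomial expansion of $(\eta+\mu^\nu)^n$ minus its $r=0$ term. You merely make explicit the final binomial-expansion step that the paper leaves to the reader.
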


		Let us now consider the probability space $(\Omega, \mathcal{F}, \mathbb{P})$ and
		the subordinated filtration
		$\mathfrak{G} = \left( \mathcal{G}_t \right)_{t \ge 0}
			= \bigl( \mathcal{F}_{\mathscr{V}_t^{\delta/n}} \bigr)_{t \ge 0}$.
		In the following, our interest will focus on the subordinated and $\mathfrak{G}$-adapted
		process
		\begin{align}
			\label{proc}
			{}_\eta \mathfrak{V}_t^{\nu,\delta} = {}_\eta \mathcal{V}_{\mathscr{V}_t^{\delta/n}}^{\nu,n},
			\qquad t \ge 0, \: \delta > 0, \: \nu \in (0,1), \: \eta >0.
		\end{align}

		\begin{proposition}
			\label{mamady}
			The Laplace transform of the process ${}_\eta \mathfrak{V}_t^{\nu,\delta}$, $t \ge 0$, can be
			written as
			\begin{align}
				\label{proc2}
				\mathbb{E} \exp \left( -\mu \: {}_\eta \mathfrak{V}_t^{\nu,\delta} \right)
				= \exp \left( -t \left[ \left( \eta+\mu^\nu \right)^\delta -\eta^\delta \right] \right),
				\qquad t \ge 0, \: \mu > 0.
			\end{align}
			\begin{proof}
				By definition \eqref{proc} we can write that
				\begin{align}
					\mathbb{E} \exp \left( -\mu \: {}_\eta \mathfrak{V}_t^{\nu,\delta} \right)
					& = \int_0^\infty \mathbb{E} \exp \left( -\mu \: {}_\eta \mathcal{V}_s^{\nu,n} \right)
					\mathbb{P} \left( \mathscr{V}_t^{\delta/n} \in \mathrm ds \right) \\
					& = \int_0^\infty \exp \left( -s \left[ \left( \eta+\mu^\nu \right)^n -\eta^n \right] \right)
					\mathbb{P} \left( \mathscr{V}_t^{\delta/n} \in \mathrm ds \right). \notag
				\end{align}
				Then by using \eqref{garlach} we directly arrive at \eqref{proc2}.
			\end{proof}
		\end{proposition}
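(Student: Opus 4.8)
The plan is to exploit the subordination structure in \eqref{proc} together with the Laplace transform computed in the previous proposition. The key observation is that $ {}_\eta\mathfrak{V}_t^{\nu,\delta} $ is obtained by running the process $ {}_\eta\mathcal{V}^{\nu,n} $ along the independent random clock $ \mathscr{V}_t^{\delta/n} $, so conditioning on the value of this clock reduces the whole computation to a single one-dimensional Laplace transform of the tempered stable subordinator.

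First I would condition on $ \mathscr{V}_t^{\delta/n} $ and use the independence of $ {}_\eta\mathcal{V}^{\nu,n} $ from $ \mathscr{V}^{\delta/n} $ (which holds by construction, since $ \mathscr{V}^{\delta/n} $ is what drives the subordinated filtration $ \mathfrak{G} $) to write
\[
\mathbb{E}\exp\!\left(-\mu\,{}_\eta\mathfrak{V}_t^{\nu,\delta}\right)
= \int_0^\infty \mathbb{E}\exp\!\left(-\mu\,{}_\eta\mathcal{V}_s^{\nu,n}\right)\,\mathbb{P}\!\left(\mathscr{V}_t^{\delta/n}\in\mathrm ds\right).
\]
By the previous proposition the inner Laplace transform equals $ \exp\!\left(-s\big[(\eta+\mu^\nu)^n-\eta^n\big]\right) $, so the right-hand side is exactly $ \mathbb{E}\exp\!\left(-\lambda\,\mathscr{V}_t^{\delta/n}\right) $ with $ \lambda=(\eta+\mu^\nu)^n-\eta^n\ge 0 $. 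Interchanging expectation and integration here is harmless because the integrand is nonnegative and bounded by $ 1 $.

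Next I would insert the Laplace exponent \eqref{garlach} of $ \mathscr{V}^{\delta/n} $, namely $ \Phi(\lambda)=(\eta^n+\lambda)^{\delta/n}-\eta^\delta $, to obtain $ \exp\!\left(-t\big[(\eta^n+\lambda)^{\delta/n}-\eta^\delta\big]\right) $. Substituting $ \lambda=(\eta+\mu^\nu)^n-\eta^n $ gives $ \eta^n+\lambda=(\eta+\mu^\nu)^n $, hence $ (\eta^n+\lambda)^{\delta/n}=(\eta+\mu^\nu)^\delta $, and the claimed formula \eqref{proc2} follows at once.

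There is essentially no analytic obstacle; the only points requiring a word of care are bookkeeping ones: that $ \lambda\ge 0 $, so that the Laplace-exponent formula may legitimately be evaluated at $ \lambda $ (immediate, since $ \mu>0 $ and $ n\ge 1 $), and that $ \delta/n\in(0,1] $, so that \eqref{garlach} is a genuine tempered-stable Laplace exponent (this is exactly the role of the assumption $ \lceil\delta\rceil=n $, the value $ \delta/n=1 $ being the trivial deterministic case). In short, the whole argument is one conditioning step followed by the substitution $ \lambda\mapsto(\eta+\mu^\nu)^n-\eta^n $ inside $ \Phi $.
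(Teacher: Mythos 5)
Your proof is correct and follows the same route as the paper: condition on the value of the subordinator $\mathscr{V}_t^{\delta/n}$, insert the Laplace transform of ${}_\eta\mathcal{V}_s^{\nu,n}$ from the preceding proposition, and evaluate the Laplace exponent \eqref{garlach} at $(\eta+\mu^\nu)^n-\eta^n$. The extra remarks on nonnegativity of the argument and on $\delta/n\in(0,1]$ are sensible bookkeeping but do not change the argument.
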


		\begin{remark}
			Plainly, when $\delta=n=1$, the process \eqref{proc} coincides with a standard $\nu$-stable
			subordinator, while for $\nu \to 1$, $\delta \in (0,1)$, it is a tempered $\delta$-stable
			subordinator. As an example, we also note that for $\delta=2$, $\nu \in (0,1/2)$,
			${}_\eta \mathfrak{V}_t^{\nu,\delta} = V_t^{2\nu} + V_{2 \eta t}^\nu$ is the rescaled
			sum of two independent stable subordinators.
		\end{remark}
		
		In the following theorem we prove that the $\mathfrak{G}$-adapted process ${}_\eta \mathfrak{V}_t^{\nu,\delta}$,
		$t \ge 0$, is in fact a subordinator and derive its associated L\'evy measure.
		
		\begin{theorem}
			\label{measura}
			The $\mathfrak{G}$-adapted process ${}_\eta \mathfrak{V}_t^{\nu,\delta}$, $t \ge 0$, is a
			subordinator. Furthermore, its associated L\'evy measure is
			\begin{align}
				\label{lamisura}
				m(\mathrm dx) = \int_0^\infty e^{-\eta^n t} \mathbb{P} \left( {}_\eta \mathcal{V}_t^{\nu,n}
				\in \mathrm dx \right) \frac{\delta}{n} \frac{t^{-\left( 1+\frac{\delta}{n} \right)}}{
				\Gamma \left( 1-\frac{\delta}{n} \right)} \mathrm dt.
			\end{align}
			\begin{proof}
				In order to prove the statement of the theorem we make use of the L\'evy--Khintchine
				formula for subordinators. The Laplace exponent associated to 
				${}_\eta \mathfrak{V}_t^{\nu,\delta}$, $t \ge 0$, that is
				$\Phi(\mu) = \left( \eta+\mu^\nu \right)^\delta -\eta^\delta$,
				can be retrieved with the following steps.
				\begin{align}
					& \int_0^\infty \left( 1-e^{-\mu x} \right)
					\int_0^\infty e^{-\eta^n t} \mathbb{P} \left( {}_\eta \mathcal{V}_t^{\nu,n}
					\in \mathrm dx \right) \frac{\delta}{n} \frac{t^{-\left( 1+\frac{\delta}{n} \right)}}{
					\Gamma \left( 1-\frac{\delta}{n} \right)} \mathrm dt \\
					& = \int_0^\infty \mathrm dt \, e^{-\eta^n t} \frac{\delta}{n}
					\frac{t^{-\left( 1+\frac{\delta}{n} \right)}}{
					\Gamma \left( 1-\frac{\delta}{n} \right)} \int_0^\infty \left( 1-e^{-\mu x} \right)
					\mathbb{P} \left( {}_\eta \mathcal{V}_t^{\nu,n} \in \mathrm dx \right) \notag \\
					& = \int_0^\infty \mathrm dt \, e^{-\eta^n t} \frac{\delta}{n}
					\frac{t^{-\left( 1+\frac{\delta}{n} \right)}}{
					\Gamma \left( 1-\frac{\delta}{n} \right)}
					\left[ 1-\int_0^\infty e^{-\mu x} \mathbb{P}
					\left( {}_\eta \mathcal{V}_t^{\nu,n} \in \mathrm dx \right) \right] \notag \\
					& = \int_0^\infty \mathrm dt \, e^{-\eta^n t} \frac{\delta}{n}
					\frac{t^{-\left( 1+\frac{\delta}{n} \right)}}{
					\Gamma \left( 1-\frac{\delta}{n} \right)}
					\left( 1-e^{-t \left[ \left( \eta+\mu^\nu \right)^n -\eta^n \right]} \right) \notag \\
					& = \int_0^\infty \mathrm dt \left( e^{-\eta^n t} - e^{-t\left( \eta+\mu^\nu \right)} \right)
					\frac{\delta}{n}\frac{t^{-\left( 1+\frac{\delta}{n} \right)}}{
					\Gamma \left( 1-\frac{\delta}{n} \right)} \notag \\
					& = \int_0^\infty \mathrm dt \left( 1 - e^{-t\left( \eta+\mu^\nu \right)} \right)
					\frac{\delta}{n}\frac{t^{-\left( 1+\frac{\delta}{n} \right)}}{
					\Gamma \left( 1-\frac{\delta}{n} \right)}
					- \int_0^\infty \mathrm dt \left( 1 - e^{-\eta^n t} \right)
					\frac{\delta}{n}\frac{t^{-\left( 1+\frac{\delta}{n} \right)}}{
					\Gamma \left( 1-\frac{\delta}{n} \right)} \notag \\
					& = \frac{\delta}{n} \frac{1}{\Gamma \left( 1-\frac{\delta}{n} \right)}
					\left[ \int_0^\infty \int_0^t \left( \eta+\mu^\nu \right)^n
					e^{-y \left( \eta+\mu^\nu \right)^n} \mathrm dy \, t^{-1-\frac{\delta}{n}} \mathrm dt
					- \int_0^\infty \int_0^t \eta^n e^{-y \eta^n} \mathrm dy \, t^{-1-\frac{\delta}{n}}
					\mathrm dt  \right] \notag \\
					& = \frac{\delta}{n} \frac{1}{\Gamma \left( 1-\frac{\delta}{n} \right)}
					\left[ \int_0^\infty \int_y^\infty \left( \eta+\mu^\nu \right)^n
					e^{-y \left( \eta+\mu^\nu \right)^n} \mathrm dy \, t^{-1-\frac{\delta}{n}} \mathrm dt
					- \int_0^\infty \int_y^\infty \eta^n e^{-y \eta^n} \mathrm dy \, t^{-1-\frac{\delta}{n}}
					\mathrm dt  \right] \notag \\
					& = \frac{\delta}{n} \frac{1}{\Gamma \left( 1-\frac{\delta}{n} \right)}
					\int_0^\infty \mathrm dy \left( \eta+\mu^\nu \right)^n e^{-y \left( \eta+\mu^\nu \right)^n}
					y^{-\frac{\delta}{n}} \frac{n}{\delta}
					-\frac{\delta}{n} \frac{1}{\Gamma \left( 1-\frac{\delta}{n} \right)}
					\int_0^\infty \mathrm dy \, \eta^n e^{-y \eta^n}
					y^{-\frac{\delta}{n}} \frac{n}{\delta} \notag \\
					& = \frac{1}{\Gamma \left( 1-\frac{\delta}{n} \right)} \int_0^\infty
					\mathrm dz \, e^{-z} \left( \frac{z}{\left( \eta+\mu^\nu \right)^n} \right)^{-\frac{\delta}{n}}
					- \frac{1}{\Gamma \left( 1-\frac{\delta}{n} \right)} \int_0^\infty \mathrm dz \,
					e^{-z} \left( \frac{z}{\eta^n} \right)^{-\frac{\delta}{n}} \notag \\
					& = \left( \eta+\mu^\nu \right)^\delta -\eta^\delta. \notag
				\end{align}
				From the above computation and Theorem \ref{lkt}, the claimed form for the L\'evy measure easily follows.
			\end{proof}
		\end{theorem}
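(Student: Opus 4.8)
The plan is to argue in two stages. First I would check that ${}_\eta \mathfrak{V}_t^{\nu,\delta}$, $t \ge 0$, is a subordinator; then, granted this, I would identify its L\'evy measure by combining the Laplace exponent already found in Proposition \ref{mamady} with the \emph{uniqueness} part of Theorem \ref{lkt}.

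For the first stage, note that ${}_\eta \mathcal{V}_t^{\nu,n}$ is a finite sum of independent, deterministically time-rescaled stable subordinators, hence an increasing L\'evy process started at $0$, i.e.\ a subordinator, $\mathfrak{F}$-adapted; and $\mathscr{V}_t^{\delta/n}$ is a tempered stable subordinator, $\mathfrak{F}$-adapted and independent of the former by construction. Since the composition of a subordinator with an independent subordinator is again a subordinator (see \citep{bertoin}), the process ${}_\eta \mathfrak{V}_t^{\nu,\delta} = {}_\eta \mathcal{V}_{\mathscr{V}_t^{\delta/n}}^{\nu,n}$ is a subordinator, and it is $\mathfrak{G}$-adapted because ${}_\eta \mathcal{V}_{\mathscr{V}_t^{\delta/n}}^{\nu,n}$ is $\mathcal{F}_{\mathscr{V}_t^{\delta/n}}$-measurable. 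Theorem \ref{lkt} therefore guarantees a unique drift $b \ge 0$ and a unique L\'evy measure $m$, with $\int_0^\infty (1 \wedge x)\,m(\mathrm dx) < \infty$, representing $\Phi(\mu) = (\eta+\mu^\nu)^\delta - \eta^\delta$.

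It then suffices to show that the measure in \eqref{lamisura} is admissible and reproduces $\Phi$ with zero drift. Writing $m(\mathrm dx) = \int_0^\infty e^{-\eta^n t}\,\mathbb{P}({}_\eta \mathcal{V}_t^{\nu,n} \in \mathrm dx)\,\pi(\mathrm dt)$, where $\pi(\mathrm dt) = \frac{\delta/n}{\Gamma(1-\delta/n)}\,t^{-1-\delta/n}\,\mathrm dt$ is the L\'evy measure of the $(\delta/n)$-stable subordinator (legitimate since $\delta/n \in (0,1)$), I would plug this into $\int_0^\infty (1-e^{-\mu x})\,m(\mathrm dx)$; all integrands being nonnegative, Tonelli's theorem allows integrating in $x$ first. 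The inner integral equals $1 - e^{-t[(\eta+\mu^\nu)^n-\eta^n]}$ by \eqref{rossi}, so after the factor $e^{-\eta^n t}$ the $\eta^n$-exponents merge and what remains is $\int_0^\infty \bigl(e^{-\eta^n t} - e^{-t(\eta+\mu^\nu)^n}\bigr)\,\pi(\mathrm dt)$. Splitting this as $\int_0^\infty (1-e^{-t(\eta+\mu^\nu)^n})\,\pi(\mathrm dt) - \int_0^\infty (1-e^{-t\eta^n})\,\pi(\mathrm dt)$ --- each piece convergent at $0$ and $\infty$ precisely because $\delta/n \in (0,1)$ --- and using the standard stable identity $\int_0^\infty (1-e^{-ct})\,\pi(\mathrm dt) = c^{\delta/n}$ ($c \ge 0$, one substitution $z=ct$), I obtain $[(\eta+\mu^\nu)^n]^{\delta/n} - (\eta^n)^{\delta/n} = (\eta+\mu^\nu)^\delta - \eta^\delta = \Phi(\mu)$. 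Finiteness of $\Phi(\mu)$ for all $\mu$ then forces $\int_0^\infty (1 \wedge x)\,m(\mathrm dx) < \infty$, so $m$ is admissible; matching $\Phi$ with no drift term forces $b=0$; and uniqueness in Theorem \ref{lkt} identifies \eqref{lamisura} as \emph{the} L\'evy measure.

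The computation is essentially routine; the part needing care is the bookkeeping that converts the tempering weight $e^{-\eta^n t}$ sitting against the (non-tempered) stable density $\pi(\mathrm dt)$ into a clean \emph{difference} of two stable Laplace exponents --- in particular, making sure the substitution $z = ct$ is carried out consistently in both terms and that the split into two separately convergent integrals is legitimate, which hinges on $\delta/n \in (0,1)$. One should also keep the degenerate case $\delta = n$ in mind separately, where $\mathscr{V}^{\delta/n}$ collapses to the identity and the right-hand side of \eqref{lamisura} has to be interpreted as the L\'evy measure of ${}_\eta \mathcal{V}^{\nu,n}$ itself.
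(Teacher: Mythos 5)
Your proposal is correct and follows essentially the same route as the paper: both verify that the candidate measure reproduces the Laplace exponent $(\eta+\mu^\nu)^\delta-\eta^\delta$ via Tonelli, the Laplace transform \eqref{rossi}, and a split into two stable-type integrals, then conclude by the (uniqueness part of the) L\'evy--Khintchine theorem. The only differences are cosmetic improvements on your side --- you justify the subordinator property explicitly via composition of independent subordinators and flag the degenerate case $\delta=n$, while the paper rederives your ``standard stable identity'' $\int_0^\infty(1-e^{-ct})\,\pi(\mathrm dt)=c^{\delta/n}$ by hand.
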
		
		
		\begin{remark}
			Note that the determined measure can also be expressed by means of Riemann--Liouville
			fractional derivatives. Indeed, since \citep[formula (2.117)]{podlubny}
			\begin{align}
				- \frac{\delta}{n} \frac{t^{-\left( 1+\frac{\delta}{n} \right)}}{\Gamma
				\left( 1-\frac{\delta}{n} \right)} = \frac{t^{-\left( 1+\frac{\delta}{n} \right)}}{\Gamma
				\left( 1-\left( 1+\frac{\delta}{n} \right) \right)} =
				\left( D_{0+}^{1+\frac{\delta}{n}} 1 \right) (t),
			\end{align}
			where
			\begin{align}
				\left( D_{a+}^\beta f \right) (t) = \frac{1}{\Gamma(m-\beta)}
				\frac{\mathrm d^m}{\mathrm d t^m} \int_a^\infty \frac{f(y)}{(t-y)^{\beta-m+1}} \mathrm dy,
				\qquad \beta > 0, \: m = [\beta]+1,
			\end{align}
			is the Riemann--Liouville fractional derivative \citep{podlubny}, we obtain readily that
			\begin{align}
				m (\mathrm dx) = - \int_0^\infty e^{-\eta^n t} \mathbb{P} \left( {}_\eta \mathcal{V}_t^{\nu,n}
				\in \mathrm dx \right) \left( D_{0+}^{1+\frac{\delta}{n}} 1 \right) (t) \, \mathrm dt.
			\end{align}
		\end{remark}
		
		In order to study the operator associated to the subordinator ${}_\eta \mathfrak{V}_t^{\nu,\delta}$ we start from the
		Laplace transform
		\begin{align}
			\mathbb{E} e^{-\mu \, {}_\eta \mathfrak{V}_t^{\nu,\delta}} = e^{-t \left[ (\eta+\mu^\nu)^\delta -\eta^\delta \right]},
			\qquad t \ge 0, \: \mu > 0.
		\end{align}
		By taking the derivative with respect to time we obtain
		\begin{align}
			\label{bilbao}
			\frac{\mathrm d}{\mathrm dt} \mathbb{E} e^{-\mu\, {}_\eta \mathfrak{V}_t^{\nu,\delta}}
			= - \left[ (\eta+\mu^\nu)^\delta -\eta^\delta \right] \mathbb{E} e^{-\mu \, {}_\eta \mathfrak{V}_t^{\nu,\delta}}.
		\end{align}
		From Theorem \ref{measura} we know that
		\begin{align}
			\left[ (\eta+\mu^\nu)^\delta -\eta^\delta \right] = \int_0^{\infty} (1-e^{-\mu x})m(\mathrm dx),
		\end{align}
		where $m(\mathrm dx)$ is given by \eqref{lamisura}. Hence,
		\begin{align}
			\left[ (\eta+\mu^\nu)^\delta -\eta^\delta \right] \mathbb{E} e^{-\mu \, {}_\eta \mathfrak{V}_t^{\nu,\delta}}
			= \int_0^\infty \left( \mathbb{E} e^{-\mu \, {}_\eta \mathfrak{V}_t^{\nu,\delta}}
			- e^{-\mu y} \mathbb{E} e^{-\mu \, {}_\eta \mathfrak{V}_t^{\nu,\delta}} \right) m(\mathrm dy),
		\end{align}
		and equation \eqref{bilbao} becomes
		\begin{align}
			\frac{\mathrm d}{\mathrm dt} \mathbb{E} e^{-\mu\, {}_\eta \mathfrak{V}_t^{\nu,\delta}}
			= - \int_0^\infty \left( \mathbb{E} e^{-\mu \, {}_\eta \mathfrak{V}_t^{\nu,\delta}}
			- e^{-\mu y} \mathbb{E} e^{-\mu \, {}_\eta \mathfrak{V}_t^{\nu,\delta}} \right) m(\mathrm dy).
		\end{align}
		Now, a simple inversion of the Laplace transforms leads to
		\begin{align}
			\frac{\mathrm d}{\mathrm dt} \mathbb{P} \bigl( {}_\eta \mathfrak{V}_t^{\nu,\delta} \in \mathrm dx \bigr) /\mathrm dx
			= - \int_0^\infty \! \! \! \left( \mathbb{P} \bigl( {}_\eta \mathfrak{V}_t^{\nu,\delta} \in \mathrm dx \bigr) /\mathrm dx
			- \mathbb{P} \bigl( {}_\eta \mathfrak{V}_t^{\nu,\delta} \in \mathrm d(x-y) \bigr) /\mathrm d(x-y) \right) m(\mathrm dy).
		\end{align}
		Let us denote now $v(x,t) = \mathbb{P} \left( {}_\eta \mathfrak{V}_t^{\nu,\delta} \in \mathrm dx \right) /\mathrm dx$
		and with
		\begin{align}
			{}_\eta \Theta^{\nu,\delta}_x f(x,t) = \int_0^\infty \left( f(x,t) - f(x-y,t) \right) m(\mathrm dy)
		\end{align}
		the generating form of the operator. The probability density function $v(x,t)$
		of ${}_\eta \mathfrak{V}_t^{\nu,\delta}$ satisfies
		\begin{align}
			\frac{\mathrm d}{\mathrm dt} v(x,t) = - {}_\eta \Theta^{\nu,\delta}_x v(x,t), \qquad x \ge 0, \: t \ge 0.
		\end{align}
		
		\begin{remark}
			\label{uee}
			We know \citep{saigo,tom} that the Prabakhar derivative is
			defined, for a suitable set of functions \citep[see][for details]{saigo}, as
			\begin{align}
				\label{00tex}
				\bm{\mathrm D}_{\alpha,\eta,\zeta;0+}^\xi f (t) = D^{\eta+\theta}_{0+}
				\int_0^t (t-y)^{\theta-1} E_{\alpha,\theta}^{-\xi}\left[ \zeta (t-y)^\alpha \right]
				f(y) \, \mathrm dy,
			\end{align}
			with $\theta,\eta \in \mathbb{C}$, $\Re (\theta) > 0$, $\Re (\eta)>0$,
			$\zeta \in \mathbb{C}$, $t \ge 0$. The fractional
			derivative appearing in \eqref{00tex} is the Riemann--Liouville fractional derivative with respect to time $t$
			and
			\begin{align}
				\label{00poch}
				E_{\kappa,\varpi}^\gamma (x) = \sum_{r=0}^\infty \frac{x^r(\gamma)_r}{r! \Gamma(\kappa r + \varpi)},
				\qquad \kappa,\varpi,\gamma \in \mathbb{C}, \: \Re (\kappa)> 0,
			\end{align}
			is the generalized Mittag--Leffler function (see e.g.\ \citep{saigo}).
			The Laplace transform of the Prabakhar derivative is \citep{dovidio,saigo}
			\begin{align}
				\int_0^\infty e^{-st} \bm{\mathrm D}_{\alpha,\eta,\zeta;0+}^\xi f(t) \, \mathrm dt
				= s^\eta (1-\zeta s^{-\alpha})^\xi \tilde{f}(s), \qquad s > 0,
			\end{align}
			where $\tilde{f}$ is the Laplace transform of $f$.
			Therefore, with our choice of parameters,
			\begin{align}
				\int_0^\infty e^{-st} \bm{\mathrm D}_{\nu,\nu \delta,-\eta;0+}^\delta f(t) \, \mathrm dt
				= s^{\nu \delta} (1+\eta s^{-\nu})^\delta \tilde{f}(s) = (s^\nu +\eta)^\delta \tilde{f}(s), \qquad s > 0.
			\end{align}
			This implies that ${}_\eta \Theta^{\nu,\delta}_x = \bm{\mathrm D}_{\nu,\nu \delta,-\eta;0+}^\delta - \eta^\delta$
			and thus
			\begin{align}
				\label{hop2}
				\int_0^\infty e^{-\mu t} {}_\eta \Theta^{\nu,\delta}_x v(x,t) \, \mathrm dx = \left[ (\mu^\nu+\eta)^\delta - \eta^\delta \right]
				\tilde{v}(\mu,t).
			\end{align}
		\end{remark}
		
		\begin{remark}
			Note that an alternative representation for the operator ${}_\eta \Theta^{\nu,\delta}_x$ can be given in terms
			of Riemann--Liouville derivatives. We have
			\begin{align}
				{}_\eta \Theta^{\nu,\delta}_x = (\eta+D_{0+}^\nu)^\delta - \eta^\delta.
			\end{align}
			This can be proved by simply computing the Laplace transform, as follows:
			\begin{align}
				\int_0^\infty e^{-\mu x} \left[ (\eta+D_{0+}^\nu)^\delta - \eta^\delta \right] f(x) \, \mathrm dx.
			\end{align}
			We can perform a formal expansion by means of Newton's theorem obtaining
			\begin{align}
				\label{hop}
				\int_0^\infty e^{-\mu x} \sum_{r=1}^\infty \binom{\delta}{r} \eta^{\delta-r} D^{\nu r}_{0+} f(x) \, \mathrm dx
				= \left[ (\mu^\nu+\eta)^\delta - \eta^\delta \right] \tilde{f}(\mu).
			\end{align}
			We considered the set of functions for which the semigroup property for the Riemann--Liouville derivative holds,
			and also that $D_{0+}^0$ is the identity function. Note finally that \eqref{hop} coincides with \eqref{hop2}.
		\end{remark}

	\section{Generalization of the space-fractional Poisson process}

		\label{app}
		In this section we present a study of possible generalizations of the so-called space-fractional
		Poisson process \citep{orsingher} (see also \citep{toaldo} for other more generalized results and special cases).
		Furthermore, in the following we will see how the process introduced in
		the previous section arises rather naturally in this framework as a time-change of an independent
		homogeneous Poisson process.
		Furthermore, Section \ref{verylast} shows the effect of replacing the
		integer-order time derivative in the governing difference-differential equations
		with a non-local integro-differential operator with a three-parameter Mittag--Leffler function in the kernel,
		i.e.\ the so-called regularized Prabhakar derivative.
		
		\subsection{Space-fractional Poisson process}

			\label{spafra}		
			In order to make the paper as self-contained as possible, we summarize here the basic construction of the space-fractional Poisson
			process as it was carried out in \citep{orsingher}. The original aim was to generalize a homogeneous Poisson process
			in a fractional sense by introducing a fractional difference operator in the governing equations acting on the
			state space. The chosen operator is $(1-B)^\alpha$, $\alpha \in (0,1)$ (where $B$ is the backward-shift operator, that is,
			given a function $f_k$ depending on an index $k \in \mathbb{Z}$,
			$B(f_k)=f_{k-1}$ and $B^r(f_k)=B(B^{r-1}(f_k))$)
			which appears in the study of long memory time series.
			The introduction of the fractional
			difference operator $(1-B)^\alpha = \sum_{r=0}^\infty\binom{\alpha}{r}(-1)^r B^r$,
			implies a dependence of the probability of attaining a particular state
			to those of all the states below.
			The difference-differential equations for the state probabilities of the space-fractional Poisson process
			read
			\begin{align}
				\label{spa}
				\begin{cases}
					\frac{\mathrm d}{\mathrm dt} p_k(t) = -\lambda^\alpha(1-B)^\alpha p_k(t),
					& \alpha \in (0,1], \: \lambda>0,\\
					p_k(0) = \delta_{k,0},
				\end{cases}
			\end{align}
			where $p_k(t) = \mathbb{P}\{ N^\alpha(t) = k \}$, $k\geq 0$, $t\ge 0$, are the state probabilities of the space-fractional
			homogeneous Poisson process $N^\alpha(t)$, $t\ge 0$. If $\alpha=1$, the process $N^\alpha(t)$ reduces to the
			homogeneous Poisson process of rate $\lambda$.
			The space-fractional Poisson process possesses independent and stationary increments and $\mathbb{E}(N^\alpha(t))^h = \infty$,
			$h=1,2,\dots\,$.
			The Cauchy problem \eqref{spa} is easily solved using the probability generating function $G(u,t)$ and
			the Laplace transform. It turns out that the probability generating function can be written as
			\begin{align}
				G(u,t) = e^{-\lambda^\alpha t (1-u)^\alpha}, \qquad |u|\leq 1,
			\end{align}
			and by means of a simple Taylor expansion the state probability distribution is recognized as a discrete stable
			distribution and reads
			\begin{align}
				\label{sv}
				p_k(t) = \frac{(-1)^k}{k!} \sum_{r=0}^\infty \frac{(-\lambda^\alpha t)^r}{r!} \frac{\Gamma(\alpha r+1)}{
				\Gamma(\alpha r+1-k)}, \qquad k \ge 0.
			\end{align}
			
			The behaviour of the process is made apparent by the subordination representation
			\begin{align}
				\label{poldo}
				N^\alpha(V^\gamma_t) \overset{\text{d}}{=} N^{\alpha \gamma}(t), \qquad \alpha,\gamma \in (0,1).
			\end{align}
			For $\alpha=1$, the time-changed process $N^1(V^\gamma_t)$ increases super-linearly with jumps of any integer size.
			Following the same lines, in the next section we will construct a generalized model by suitably adapting the
			operator acting on the state-space.

		\subsection{Generalized model}

			\label{genspafra}
			Consider the equations
			\begin{align}
				\label{ham}
				\begin{cases}
					\frac{\mathrm d}{\mathrm dt} p_k(t) = -\left\{ \left[ \eta+\lambda^\nu(1-B)^\nu
					\right]^\delta -\eta^\delta \right\} p_k(t), & \nu n \in (0,1), \: n= \lceil \delta \rceil, \: \delta,\eta \in \mathbb{R}^+,
					\: \lambda > 0, \\
					p_k(0) = \delta_{k,0}. 
				\end{cases}
			\end{align}
			The generating function $G(u,t) = \sum_{k=0}^\infty u^kp_k(t)$ can be easily deduced from \eqref{ham} by considering the following steps.
			\begin{align}
				\label{moski}
				\frac{\partial}{\partial t} G(u,t) & = - \sum_{r=1}^\infty \binom{\delta}{r} \eta^{\delta-r} \lambda^{\nu r}
				\sum_{m=0}^\infty \binom{\nu r}{m} (-1)^m \sum_{k=m}^\infty u^k p_{k-m}(t) \\
				& = - \sum_{r=1}^\infty \eta^{\delta-r} \binom{\delta}{r} \lambda^{\nu r}
				\sum_{m=0}^\infty \binom{\nu r}{m} (-1)^m \sum_{k=0}^\infty u^{k+m} p_k(t) \notag \\
				& = - \sum_{r=1}^\infty \binom{\delta}{r} \eta^{\delta-r} \lambda^{\nu r}
				\sum_{m=0}^\infty \binom{\nu r}{m} (-u)^m G(u,t) \notag \\
				& = - G(u,t) \sum_{r=1}^\infty \binom{\delta}{r} \eta^{\delta-r} \lambda^{\nu r} (1-u)^{\nu r} \notag \\
				& = - G(u,t) \left[ \left( \eta+\lambda^\nu(1-u)^\nu \right)^\delta-\eta^\delta \right], \qquad G(u,0)=1. \notag
			\end{align}
			The solution is thus
			\begin{align}
				\label{solmamady}
				G(u,t) = e^{-t\left[ \left( \eta +\lambda^\nu(1-u)^\nu \right)^\delta -\eta^\delta \right]}, \qquad |u| \le 1.
			\end{align}

			The following theorem describes the structure of the point process for which the state probability distribution
			satisfies \eqref{ham}. For the proof we will make use of results described in Section \ref{lenovo}.
	
			\begin{theorem}
				\label{presse}
				Let ${}_\eta \mathfrak{V}_t^{\nu,\delta}$, $t \ge 0$, be the process in \eqref{proc} and
				$N(t)$, $t \ge 0$, be a homogeneous Poisson process of rate $\lambda>0$ independent of ${}_\eta \mathfrak{V}_t^{\nu,\delta}$.
				The probabilities $\mathbb{P}\{ N( {}_\eta \mathfrak{V}_t^{\nu,\delta} )=k \}$, $k \ge 0$, $t \ge 0$,
				of being in state $k$ at time $t$,
				are solutions to \eqref{ham}.
	
				\begin{proof}
					By exploiting the subordination property, the probability generating function
					of the process $N( {}_\eta \mathfrak{V}_t^{\nu,\delta} )$
					can be written as
					\begin{align}
						\int_0^\infty \mathbb{E} u^{N(s)} \mathbb{P}\bigl( {}_\eta \mathfrak{V}_t^{\nu,\delta} \in \mathrm ds\bigr).
					\end{align}
					Then, by using Proposition \ref{mamady}, we obtain the probability generating function \eqref{solmamady}.
				\end{proof}
			\end{theorem}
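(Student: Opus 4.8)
The plan is to compute the probability generating function $G(u,t)=\mathbb{E}\,u^{N({}_\eta\mathfrak{V}_t^{\nu,\delta})}$ directly from the subordination structure, to recognise it as the closed form \eqref{solmamady}, and then to extract from it that the coefficients $p_k(t)=\mathbb{P}\{N({}_\eta\mathfrak{V}_t^{\nu,\delta})=k\}$ satisfy \eqref{ham}.

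First I would condition on the value of the subordinator. Since $N$ is independent of ${}_\eta\mathfrak{V}_t^{\nu,\delta}$,
\begin{align}
\mathbb{E}\,u^{N({}_\eta\mathfrak{V}_t^{\nu,\delta})}=\int_0^\infty\mathbb{E}\,u^{N(s)}\,\mathbb{P}\bigl({}_\eta\mathfrak{V}_t^{\nu,\delta}\in\mathrm ds\bigr).
\end{align}
Inserting the Poisson generating function $\mathbb{E}\,u^{N(s)}=e^{-\lambda s(1-u)}$ and noting that $\mu:=\lambda(1-u)\ge0$ for $u\in[-1,1]$, the right-hand side equals $\mathbb{E}\exp(-\mu\,{}_\eta\mathfrak{V}_t^{\nu,\delta})$. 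Proposition \ref{mamady}, extended to $\mu\ge0$ by continuity (the value $\mu=0$, i.e.\ $u=1$, being trivial), then gives
\begin{align}
G(u,t)=\exp\Bigl(-t\bigl[(\eta+\lambda^\nu(1-u)^\nu)^\delta-\eta^\delta\bigr]\Bigr),
\end{align}
which is precisely \eqref{solmamady}.

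It then remains to translate this closed form back into \eqref{ham}. Differentiating in $t$ gives $\partial_tG(u,t)=-\bigl[(\eta+\lambda^\nu(1-u)^\nu)^\delta-\eta^\delta\bigr]G(u,t)$ with $G(u,0)=1$. Since $u\mapsto(\eta+\lambda^\nu(1-u)^\nu)^\delta-\eta^\delta$ is analytic in the open unit disc and vanishes at $u=0$, and $G(u,t)=\sum_{k\ge0}u^kp_k(t)$, expanding both sides in powers of $u$ about $u=0$ and running the computation \eqref{moski} in reverse shows that the coefficient of $u^k$ on the right-hand side is exactly $-\bigl[(\eta+\lambda^\nu(1-B)^\nu)^\delta-\eta^\delta\bigr]p_k(t)$; comparing coefficients then yields the system \eqref{ham}, the initial condition $p_k(0)=\delta_{k,0}$ following from ${}_\eta\mathfrak{V}_0^{\nu,\delta}=0$ and $N(0)=0$.

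The one point calling for care is this last identification: one must agree that the generalized difference operator $(\eta+\lambda^\nu(1-B)^\nu)^\delta-\eta^\delta$ in \eqref{ham}, acting on the sequence $\bigl(p_k(t)\bigr)_k$, is the operator whose symbol on generating functions is $(\eta+\lambda^\nu(1-u)^\nu)^\delta-\eta^\delta$, and one has to justify the rearrangement in \eqref{moski} — the Newton expansions of $(1-u)^{\nu r}$ and of the outer $\delta$-th power, together with the interchange of the two summations — which is legitimate for $u$ with $\lambda^\nu\lvert1-u\rvert^\nu<\eta$, a range already sufficient to pin down the power-series coefficients by analyticity. The remaining steps (conditioning, termwise Laplace inversion, differentiation under the integral sign) are routine by comparison, so I expect the bookkeeping just described to be the crux.
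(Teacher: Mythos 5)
Your proof is correct and follows essentially the same route as the paper: condition on the subordinator, use independence and the Poisson generating function to reduce to the Laplace transform in Proposition \ref{mamady}, obtain \eqref{solmamady}, and then identify the coefficient system with \eqref{ham} via the expansion \eqref{moski}. The paper's own proof is just a terser version of this, leaving the final identification (which you carefully justify by reversing \eqref{moski}) implicit.
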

			
			The form of the probability generating function shows that the mean value of the associated process is infinite
			unless the subordinating process reduces to the tempered stable subordinator, i.e.\ if $\nu=1$, $\delta \in (0,1)$.
			From the probability generating function $G(u,t)$, it is possible, by a simple Taylor expansion, to obtain
			the state probability distribution $p_k(t) = \mathbb{P}[N( {}_\eta \mathfrak{V}_t^{\nu,\delta}) = k]$, $k \ge 0$, $t \ge 0$.
			We have the following theorem.
			
			\begin{theorem}
				\label{carignano}
				The state probability distribution $p_k(t) = \mathbb{P}[N( {}_\eta \mathfrak{V}_t^{\nu,\delta}) = k]$, $k \ge 0$, $t \ge 0$,
				reads
				\begin{align}
					\label{blabla}
					p_k(t) & = e^{t\eta^\delta} \frac{(-1)^k}{k!} \sum_{m=0}^\infty \frac{\Gamma(\nu m+1)}{m! \Gamma(\nu m-k+1)}
					\left( \frac{\lambda^{\nu}}{\eta} \right)^m \sum_{r=0}^\infty
					\frac{(-t\eta^\delta)^r \Gamma(r\delta +1)}{r! \Gamma(r\delta -m+1)} \\
					& = e^{t\eta^\delta} \frac{(-1)^k}{k!} \sum_{m=0}^\infty \frac{\Gamma(\nu m+1)}{m! \Gamma(\nu m-k+1)}
					\left( \frac{\lambda^{\nu}}{\eta} \right)^m
					\! \! {}_1\psi_1 \left[ \left.
					\begin{array}{l}
						(1,\delta) \\
						(1-m,\delta)
					\end{array}
					\right| -\eta^\delta t \right], \notag
				\end{align}
				where ${}_h\psi_j(z)$ is the generalized Wright function (see \citep{kilbas}, page 56,
				formula (1.11.14)).
				\begin{proof}
					Starting from the probability generating function $G(u,t)$, $t \ge 0$, $|u|\le 1$, we have
					\begin{align}
						G(u,t) = {} & e^{t \eta^\delta} e^{-t[\eta+\lambda^\nu(1-u)^\nu]^\delta}
						= e^{t \eta^\delta} \sum_{r=0}^\infty \frac{[-t(\eta+\lambda^\nu(1-u)^\nu)^\delta]^r}{r!} \\
						= {} & e^{t \eta^\delta} \sum_{r=0}^\infty \frac{(-t)^r}{r!} \sum_{m=0}^\infty \binom{r\delta}{m}
						\eta^{\delta r-m} \lambda^{\nu m}(1-u)^{\nu m} \notag \\
						= {} & e^{t\eta^\delta} \sum_{r=0}^\infty \frac{(-t)^r}{r!} \sum_{m=0}^\infty \binom{r\delta}{m}
						\eta^{\delta r-m} \lambda^{\nu m} \sum_{h=0}^\infty \binom{\nu m}{h} (-u)^h \notag \\
						= {} & \sum_{h=0}^\infty u^h e^{t \eta^\delta} \frac{(-1)^h}{h!} \sum_{r=0}^\infty \frac{(-t)^r}{r!}
						\sum_{m=0}^\infty \frac{\Gamma(r\delta +1)}{m!\Gamma(r\delta-m+1)} \eta^{\delta r-m} \lambda^{\nu m}
						\frac{\Gamma(\nu m+1)}{\Gamma(\nu m-h+1)} \notag \\
						= {} & \sum_{h=0}^\infty u^h \left\{ e^{t\eta^\delta} \frac{(-1)^h}{h!}
						\sum_{m=0}^\infty \frac{\Gamma(\nu m+1)}{m! \Gamma(\nu m-h+1)}
						\lambda^{\nu m} \eta^{-m} \sum_{r=0}^\infty \frac{(-t)^r \eta^{\delta r} \Gamma(r\delta +1)}{r! \Gamma(r\delta
						-m+1)} \right\}, \notag
					\end{align}
					and thus the claimed formula \eqref{blabla}.
				\end{proof}
			\end{theorem}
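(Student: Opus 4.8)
The plan is to read off $p_k(t)$ as the coefficient of $u^k$ in the Taylor expansion about $u=0$ of the probability generating function \eqref{solmamady}, $G(u,t)=e^{-t[(\eta+\lambda^\nu(1-u)^\nu)^\delta-\eta^\delta]}$. First I would split off the constant-in-$u$ factor, writing $G(u,t)=e^{t\eta^\delta}\,e^{-t(\eta+\lambda^\nu(1-u)^\nu)^\delta}$, and expand the second exponential as $\sum_{r\ge0}(-t)^r(\eta+\lambda^\nu(1-u)^\nu)^{\delta r}/r!$.

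Next, for each $r$ I would apply Newton's generalized binomial theorem to $(\eta+\lambda^\nu(1-u)^\nu)^{\delta r}=\sum_{m\ge0}\binom{\delta r}{m}\eta^{\delta r-m}\lambda^{\nu m}(1-u)^{\nu m}$ and then expand $(1-u)^{\nu m}=\sum_{h\ge0}\binom{\nu m}{h}(-u)^h$. After rewriting the binomial coefficients through $\Gamma$-functions, $\binom{\delta r}{m}=\Gamma(\delta r+1)/(m!\,\Gamma(\delta r-m+1))$ and $\binom{\nu m}{h}=\Gamma(\nu m+1)/(h!\,\Gamma(\nu m-h+1))$, I would interchange the order of summation so that the $h$-sum becomes outermost, extract the coefficient of $u^h$ (the factor $(-1)^h/h!$ coming out of the $h$-sum while the $1/m!$ stays inside), and relabel $h\to k$; this yields the first line of \eqref{blabla}. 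The remaining inner sum over $r$, namely $\sum_{r\ge0}(-\eta^\delta t)^r\Gamma(\delta r+1)/(r!\,\Gamma(\delta r-m+1))$, then matches term by term the definition of the generalized Wright function \citep[p.~56, (1.11.14)]{kilbas} with numerator parameter $(1,\delta)$ and denominator parameter $(1-m,\delta)$, giving the second line of \eqref{blabla}.

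The step I expect to be the main obstacle is the justification of the rearrangement of this iterated triple series. The clean way around it is to observe that $G(\cdot,t)$ is analytic on the open unit disc: for $|u|<1$ one has $\mathrm{Re}(1-u)\ge1-|u|>0$, so $(1-u)^\nu$ (principal branch) lies in a sector about the positive axis, $\eta+\lambda^\nu(1-u)^\nu$ stays in the right half-plane, and $e^{-t(\cdot)^\delta}$ is an analytic composition; hence $G$ has a unique power-series expansion there and it suffices to verify \eqref{blabla} as an identity of the Taylor coefficients. Working first in the parameter range $\lambda^\nu/\eta<1$ with $|u|$ small, where all the binomial series above converge absolutely, legitimizes the reordering by Fubini and pins down these coefficients; the identity then propagates to the stated generality by standard analytic-continuation / formal-power-series arguments. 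A minor point worth recording is that whenever $\nu m-k+1$ or $\delta r-m+1$ is a non-positive integer the corresponding factor $1/\Gamma(\cdot)$ vanishes, so the apparently infinite sums over $m$ and $r$ in \eqref{blabla} truncate automatically, exactly as in the space-fractional case \eqref{sv}.
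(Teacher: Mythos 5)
Your proposal is correct and follows essentially the same route as the paper: split off $e^{t\eta^\delta}$, expand the exponential, apply Newton's generalized binomial theorem twice, interchange the sums, and read off the coefficient of $u^k$, identifying the inner $r$-sum with the Wright function ${}_1\psi_1$. The additional remarks on justifying the rearrangement via analyticity of $G(\cdot,t)$ on the unit disc and on the automatic truncation of the sums are sensible refinements of the paper's purely formal computation, not a different argument.
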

			
			\begin{remark}
				For $\delta=1$ we obtain the discrete stable distribution (2.15) of \citep{orsingher} characterizing the
				behaviour of the space-fractional Poisson process. Indeed it suffices to
				compute
				\begin{align}
					\eta^{-m} e^{t\eta} \sum_{r=0}^\infty \frac{(-t \eta)^r \Gamma(r+1)}{r!\Gamma(r-m+1)}
					= \eta^{-m} e^{t\eta} \sum_{r=m}^\infty \frac{(-t\eta)^r}{(r-m)!}
					= \eta^{-m} e^{t\eta} (-t\eta)^m e^{-t\eta} = (-t)^m.
				\end{align}
				See also \citep{toaldo}, Section 4.1.
				For general information on discrete stable random variables, the reader can refer to \citep{steutel} or \citep{devroye}.
				For $\nu=1$, $\delta \in (0,1)$,
				a time-change given by a tempered $\nu$-stable subordinator is retrieved (see \citep{toaldo}, Section 4.2).
			\end{remark}
			
			\begin{remark}
				Since $N(t)$ is a L\'evy process, the subordinated process $N( {}_\eta \mathfrak{V}_t^{\nu,\delta})$
				is a L\'evy process with Laplace exponent
				$\Phi(\mu) = (\eta+\lambda^\nu(1-e^{-\mu})^\nu)^\delta - \eta^\delta$,
				and thus it possesses stationary and independent increments. Moreover $N( {}_\eta \mathfrak{V}_t^{\nu,\delta})$
				is not in general a renewal process. This comes simply from \citep{kingman} (see also \citep{grandell}) and from the fact that,
				since ${}_\eta \mathfrak{V}_t^{\nu,\delta}$ is a properly rescaled
				and time-changed linear combination of independent stable subordinators, its inverse process
				$\mathfrak{E}_t = \inf \{w>0 \colon {}_\eta \mathfrak{V}_w^{\nu,\delta}>t) \}$ has in general non-stationary
				and dependent increments.
			\end{remark}
			
			\begin{remark}
				The reader can compare the state probabilities \eqref{blabla} with the results obtained
				in \citep{toaldo}, Section 2, with a suitably specialized Bernstein function $f$.
			\end{remark}

			\subsection{Time-fractional generalization with regularized Prabhakar derivatives}
				
				\label{verylast}
				We saw from the above analysis (see Remark \ref{uee} and the subordination result) that the difference operator
				\eqref{ham} is in practice connected with the Prabhakar derivative. This suggests that a further generalization,
				which can be still tractable, could involve a regularized Prabhakar derivative acting in time.
				This is what was considered  in \citep{tom} in the case of the classical difference operator
				and in \citep{dovidio} for the generator of a L\'evy process.
				
				Let us first recall the definition of the regularized Prabhakar derivative.
				\begin{definition}
				    Let $\beta,\omega,\gamma,\alpha \in \mathbb{C}$, $\Re(\beta),\Re(\alpha)>0$, $n=\lceil \Re(\beta) \rceil$,
				    $f \in AC^n[0,b]$, $0 < b \le \infty$, where
	    			\begin{equation*}
						AC^{n}\left[a,b\right] =\left\{ f:\left[a,b\right] \rightarrow
						\mathbb{R}\colon\frac{\mathrm d^{n-1}}{\mathrm dx^{n-1}}f \left( x\right)
						\text{ is absolutely continuous in $[a,b]$} \right\} .
					\end{equation*}
					The regularized Prabhakar derivative is defined as
				    \begin{align}
				        {}^C\mathbf{D}^{\gamma}_{\alpha, \beta, -\omega,0^+}f(x) = \mathbf{D}^{\gamma}_{\alpha, \beta, -\omega,0^+}
				        \left[ f(x) - \sum_{k=0}^{n-1}\frac{x^k}{k!}f^{(k)}(0^+) \right].
				    \end{align}
				\end{definition}
				For further details the reader can consult \citep{tom}.
				
				By means of the regularized Prabhakar derivative we can construct the following Cauchy problem.
				\begin{align}
					\label{pham}
					\begin{cases}
						{}^C\mathbf{D}^{\gamma}_{\alpha, \beta, -\omega,0^+} p_k(t) = -\left\{ \left[ \eta+\lambda^\nu(1-B)^\nu
						\right]^\delta -\eta^\delta \right\} p_k(t), & \nu n \in (0,1), \: n= \lceil \delta \rceil,
						\: \delta,\eta \in \mathbb{R}^+, \\
						p_k(0) = \delta_{k,0},
					\end{cases}
				\end{align}
				with the constraints $\omega > 0$, $\gamma \ge 0$, $0 < \alpha \le 1$, $0<\beta \le 1$.
				We also have $0 < \beta \lceil \gamma \rceil/\gamma - r\alpha < 1$, $\forall \: r=0,\dots,\lceil \gamma \rceil$, if $\gamma \ne 0$.
				With similar calculations as for \eqref{moski}, we obtain for $G(u,t) = \sum_k u^k p_k(t)$
				\begin{align}
					\label{pham2}
					\begin{cases}
						{}^C\mathbf{D}^{\gamma}_{\alpha, \beta, -\omega,0^+} G(u,t) = -\left\{ \left[ \eta+\lambda^\nu(1-u)^\nu
						\right]^\delta -\eta^\delta \right\} G(u,t), & \nu n \in (0,1), \\
						G(u,0) = 1.
					\end{cases}
				\end{align}
				By applying the Laplace transform with respect to time $t$ we have that
				\begin{align}
					\label{meryl}
					\tilde{G} (u,s) = \frac{s^{\beta-1}(1+\omega s^{-\alpha})^\gamma}{s^{\beta}(1+\omega s^{-\alpha})^\gamma
					+(\eta+\lambda^\nu(1-u)^\nu)^\delta -\eta^\delta},
				\end{align}
				which can be written, for $|[(\eta+\lambda^\nu(1-u)^\nu)^\delta -\eta^\delta]/[s^\beta(1+\omega s^{-\alpha})^\gamma]|<1$,
				\begin{align}
					\tilde{G} (u,s) = \sum_{n=0}^\infty \left\{ - [(\eta+\lambda^\nu(1-u)^\nu)^\delta -\eta^\delta] \right\}^n
					s^{-\beta n -1} (1+\omega s^{-\alpha})^{-n\gamma}.
				\end{align}
				We can now invert term by term the Laplace transform by using result (2.19) of \citep{saigo}
				and Theorem 30.1 of \citep{doetsch}.
				The probability generating function can be written as
				\begin{align}
					\label{exp}
					G(u,t) = \sum_{n=0}^\infty \left\{ - [(\eta+\lambda^\nu(1-u)^\nu)^\delta -\eta^\delta] t^\beta \right\}^n
					E_{\alpha,\beta n+1}^{\gamma n} (-\omega t^\alpha).
				\end{align}
				Note that formula (2.20) of \citep{toaldo} and equation \eqref{exp} only coincide when $\gamma=0$ in \eqref{exp}
				and, at the same time, in (2.20) of \citep{toaldo}, $f(\cdot)$ is specialized to $(\eta+\lambda^\nu(\cdot)^\nu)^\delta -\eta^\delta$.
								
				By expanding the probability generating function
				\eqref{exp} we can derive the state probability distribution for this generalized model.
				Indeed we have
				\begin{align}
					G(u,t) = \sum_{n=0}^\infty (-t^\beta)^n E_{\alpha,\beta n +1}^{\gamma n} (-\omega t^\alpha)
					\sum_{r=0}^n (-\eta^\delta)^{n-r} \sum_{m=0}^\infty \binom{r\delta}{m} \eta^{\delta r-m} \lambda^{\nu m}
					\sum_{h=0}^\infty \binom{\nu m}{h} (-u^h).
				\end{align}
				Rearranging, we get
				\begin{align}
					\label{pra}
					p_k(t) = \frac{(-1)^k}{k!} \sum_{n=0}^\infty (-t^\beta)^n E_{\alpha,\beta n+1}^{\gamma n}
					(-\omega t^\alpha) \sum_{r=0}^n (-\eta^\delta)^{n-r} \sum_{m=0}^\infty \binom{r\delta}{m} \eta^{\delta r-m}
					\lambda^{\nu m} \frac{\Gamma(\nu m +1)}{\Gamma(\nu m -k +1)}. 
				\end{align}
				
				\begin{remark}
					Note that for $\gamma=0$, $\delta=1$, the generating function \eqref{exp} reduces to that of the space-time
					fractional Poisson process \citep{orsingher} of parameters $(\beta,\nu)$,
					while for $\gamma=0$, $\nu =1$, $\delta \in (0,1)$, we obtain
					that of the tempered space-time fractional Poisson process.
					If we set $\gamma=0$, $\delta=1$, $\nu=1$, we get the time-fractional Poisson process of parameter $\beta$;
					for $\gamma=0$, $\delta=1$, $\beta=1$, we obtain the space-fractional Poisson process of parameter $\nu$.
				\end{remark}

				We now show that the probabilities $p_k(t)$ of \eqref{pra} are in fact state probabilities of a suitably
				time-changed homogeneous Poisson process.
				Consider the stochastic process, given as a sum of subordinated
				independent stable subordinators
				\begin{align}
					\textfrak{V}_t = \sum_{r=0}^{\lceil \gamma \rceil} V_{\Phi(t)}^{\beta
					\frac{\lceil \gamma \rceil}{\gamma}-r\alpha}, \qquad t \ge 0.
				\end{align}
				The random time
				change is defined as
				$\Phi(t) = \binom{\lceil \gamma \rceil}{r} V_t^{\frac{\gamma}{\lceil
				\gamma \rceil}}$,
				where $V_t^{\frac{\gamma}{\lceil \gamma \rceil}}$ is a stable
				subordinator independent of all the others and where
				$0<\beta \lceil \gamma \rceil/\gamma-r\alpha <1$ holds for each
				$r=0,1,\dots,\lceil \gamma \rceil$. The hitting time process can be defined in turn as
				\begin{align}
					\label{inverse}
					\mathfrak{U}_t = \inf \{ s \ge 0 \colon \textfrak{V}_s > t \}, \qquad t \ge	0.
				\end{align}
					
				We are now ready to state the following theorem.

				\begin{theorem}
					Let $\mathfrak{U}_t$, $t \ge 0$, be the hitting-time process in
					formula \eqref{inverse}.  Furthermore let
					$N( {}_\eta \mathfrak{V}_t^{\nu,\delta} )$
					be a
					homogeneous Poisson process of parameter $\lambda>0$, subordinated by
					the process ${}_\eta \mathfrak{V}_t^{\nu,\delta}$, defined in \eqref{proc} and independent of $%
					\mathfrak{U}_t$. The time-changed process
					\begin{align}
						\textswab{N}(t) = N\left({}_\eta \mathfrak{V}_{\mathfrak{U}_t}^{\nu,\delta}\right), \qquad t \ge 0,
					\end{align}
					has state probabilities \eqref{pra}.
				\end{theorem}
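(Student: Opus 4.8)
The plan is to obtain the probability generating function of $\textswab{N}(t)$ by an iterated subordination argument and to recognise it as the inverse Laplace transform of \eqref{meryl}, which was derived directly from the Cauchy problem \eqref{pham2}; uniqueness of the Laplace transform then forces $\mathbb{P}\{\textswab{N}(t)=k\}$ to coincide with \eqref{pra}.

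The first — and main — step is to identify the Laplace exponent of the inner random-time process $\textfrak{V}_t$. Conditioning on the common stable subordinator $V_t^{\gamma/\lceil\gamma\rceil}$ that drives all the summands, using their mutual independence together with $\mathbb{E}\exp(-sV_t^\rho)=\exp(-ts^\rho)$, and applying the binomial theorem exactly as in the computation leading from \eqref{rossi} to \eqref{proc2}, one finds
\begin{align*}
	\mathbb{E}\exp\!\left(-s\,\textfrak{V}_t\right)=\exp\!\left(-t\,s^{\beta}\bigl(1+\omega s^{-\alpha}\bigr)^{\gamma}\right),\qquad s>0,\ t\ge 0 .
\end{align*}
Hence $\textfrak{V}_t$ is a subordinator with Laplace exponent $\Psi(s)=s^{\beta}(1+\omega s^{-\alpha})^{\gamma}$; the constraints $\gamma/\lceil\gamma\rceil\in(0,1]$ and $0<\beta\lceil\gamma\rceil/\gamma-r\alpha<1$ guarantee that every stable subordinator entering the construction is genuine, and the strictly increasing $r=0$ summand makes $\textfrak{V}_t$ strictly increasing so that its right-inverse \eqref{inverse} is well defined (the degenerate case $\gamma=0$, where $\textfrak{V}_t=V_t^{\beta}$, being already covered by the earlier results).

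The second step is the classical relation between a strict subordinator and its hitting-time process. From $\mathbb{P}(\mathfrak{U}_t>x)=\mathbb{P}(\textfrak{V}_x<t)$ (valid since $\textfrak{V}_x$ has no atoms for $x>0$) and Fubini's theorem one gets $\int_0^\infty e^{-st}\,\mathbb{P}(\mathfrak{U}_t>x)\,\mathrm dt=s^{-1}e^{-x\Psi(s)}$, and differentiating in $x$ gives $\int_0^\infty e^{-st}\,\mathbb{P}(\mathfrak{U}_t\in\mathrm dx)/\mathrm dx\;\mathrm dt=\Psi(s)\,s^{-1}e^{-x\Psi(s)}$. Integrating against $e^{-\mu x}$ yields, for every $\mu\ge 0$,
\begin{align*}
	\int_0^\infty e^{-st}\,\mathbb{E}e^{-\mu\mathfrak{U}_t}\,\mathrm dt=\frac{\Psi(s)}{s\bigl(\mu+\Psi(s)\bigr)} .
\end{align*}

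Finally I would combine the two ingredients. By Theorem \ref{presse} the process $N({}_\eta\mathfrak{V}_t^{\nu,\delta})$ has generating function $\exp(-t[(\eta+\lambda^\nu(1-u)^\nu)^{\delta}-\eta^{\delta}])$, so conditioning on $\mathfrak{U}_t$ and using its independence of $N({}_\eta\mathfrak{V}_t^{\nu,\delta})$ gives
\begin{align*}
	\mathbb{E}\,u^{\textswab{N}(t)}=\mathbb{E}\exp\!\Bigl(-\mathfrak{U}_t\bigl[(\eta+\lambda^\nu(1-u)^\nu)^{\delta}-\eta^{\delta}\bigr]\Bigr).
\end{align*}
Taking the Laplace transform in $t$ and inserting $\mu=(\eta+\lambda^\nu(1-u)^\nu)^{\delta}-\eta^{\delta}$ together with $\Psi(s)=s^{\beta}(1+\omega s^{-\alpha})^{\gamma}$ into the previous display reproduces exactly the right-hand side of \eqref{meryl}. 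By uniqueness of the Laplace transform, $\mathbb{E}u^{\textswab{N}(t)}$ equals the generating function \eqref{exp}, and expanding it in powers of $u$ precisely as in the passage from \eqref{exp} to \eqref{pra} identifies $\mathbb{P}\{\textswab{N}(t)=k\}$ with $p_k(t)$. The only nontrivial point is the first step — extracting the closed form of $\Psi$ from the compressed definition of $\textfrak{V}_t$ and checking that it is a strict, strictly increasing subordinator so that the hitting-time identity is legitimate; everything afterwards is a routine Laplace-transform manipulation.
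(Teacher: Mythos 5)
Your proof is correct and follows essentially the same route as the paper: condition on $\mathfrak{U}_t$ to write $\mathbb{E}\,u^{\textswab{N}(t)}=\mathbb{E}\exp\bigl(-\mathfrak{U}_t[(\eta+\lambda^\nu(1-u)^\nu)^\delta-\eta^\delta]\bigr)$, take the Laplace transform in $t$, and identify the result with \eqref{meryl}; the only difference is that where the paper cites Theorem 2.2 of the D'Ovidio--Polito reference for the double-transform identity, you derive it yourself from the Laplace exponent of $\textfrak{V}_t$ and the standard inverse-subordinator relation, which makes the argument self-contained. One small caveat: for your binomial computation to yield $\Psi(s)=s^{\beta}(1+\omega s^{-\alpha})^{\gamma}$ rather than $s^{\beta}(1+s^{-\alpha})^{\gamma}$, the random time change must carry a factor $\omega^{r}$, i.e.\ $\Phi(t)=\binom{\lceil\gamma\rceil}{r}\omega^{r}V_t^{\gamma/\lceil\gamma\rceil}$, which the displayed definition of $\Phi(t)$ in the paper omits --- you silently (and correctly) supply it.
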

				\begin{proof}
					The claimed result can be proved by writing the probability generating
					function related to the time-changed process $\textswab{N}(t)$
					as
					\begin{align}
						\sum_{k=0}^\infty u^k \mathbb{P}(\textswab{N}(t)=k) = \int_0^\infty
						e^{-y[(\eta+\lambda^\nu(1-u)^\nu)^\delta -\eta^\delta]} \mathbb{P} (\mathfrak{U}_t \in \mathrm dy).
					\end{align}
					Therefore, by taking the Laplace transform with respect to time and taking into consideration
					Theorem 2.2 of \citep{dovidio} we have
					\begin{align}
						\int_0^\infty \int_0^\infty e^{-y[(\eta+\lambda^\nu(1-u)^\nu)^\delta -\eta^\delta] -st}
						\mathbb{P} (\mathfrak{U}_t \in
						\mathrm dy) \mathrm dt =
						\frac{s^{\beta-1}(1+\omega s^{-\alpha})^\gamma}{s^{\beta}(1+\omega s^{-\alpha})^\gamma
						+(\eta+\lambda^\nu(1-u)^\nu)^\delta -\eta^\delta},
					\end{align}
					which coincides with \eqref{meryl}.
				\end{proof}
				
				As a byproduct of our analysis we obtain
				\begin{align}
					\mathbb{E} e^{-\mu \, {}_\eta \mathfrak{V}_{\mathfrak{U}_t}^{\nu,\delta}}
					= \sum_{k=0}^\infty \left( -\left[ (\eta +\lambda^\nu\mu^\nu)^\delta-\eta^\delta \right] t^\beta  \right)^k
					E_{\alpha, \beta k+1}^{\gamma k}(-\omega t^\alpha), \qquad t \ge 0, \: s > 0,
				\end{align}
				that generalizes formula (3.37) of \citep{beghin2014fractional} with the Laplace exponent suitably specialized.

	\subsubsection*{Acknowledgments}
			Enrico Scalas acknowledges support from an SDF grant
			at the University of Sussex, UK.			
				



\providecommand{\bysame}{\leavevmode\hbox to3em{\hrulefill}\thinspace}
\providecommand{\MR}{\relax\ifhmode\unskip\space\fi MR }
\providecommand{\MRhref}[2]{%
  \href{http://www.ams.org/mathscinet-getitem?mr=#1}{#2}
}
\providecommand{\href}[2]{#2}

\end{document}